\documentclass{article}

\usepackage{amsthm}
\usepackage{epsfig}

\newtheorem{thm}{Theorem}

\def\0{{\bf 0}}
\def\1{{\bf 1}}
\def\d{{\ell}}

\title{Hoffman's ratio bound}

\author{Willem H. Haemers\thanks{e-mail haemers@uvt.nl}
\\
{\it\small Department of Econometrics and Operations Research,}
\\
{\it\small Tilburg University, Tilburg, The Netherlands}
\\[10pt]
{\em In memory of Alan J. Hoffman}
}

\begin{document}
	
\date{}
\maketitle

\begin{abstract}
\noindent
Hoffman's ratio bound is an upper bound for the independence number of a regular graph in terms of the eigenvalues of the adjacency matrix.
The bound has proved to be very useful and has been applied many times.
Hoffman did not publish his result, and for a great number of users the emergence of Hoffman's bound is a black hole. 
With this note I hope to clarify the history of this bound and some of its generalizations.
\\[3pt]
Keywords: graph; Hoffman bound; clique; coclique; independence number; eigenvalue.
AMS subject classification: 05C50.
\end{abstract}

\section{Introduction}

Although Hoffman didn't publish his bound, it is perhaps his most cited result.
He communicated his bound with Jaap Seidel, who then called it
the Hoffman bound (later Chris Godsil used {\lq}ratio bound{\rq} so {\lq}Hoffman's
ratio bound{\rq} seems to be a good compromise).
I was a student of Jaap Seidel in the seventies, and Jaap told me the story of the bound.
In later years, I noticed that many authors had problems with the correct reference.
I have seen references to Delsarte~\cite{Del}, Lov\'asz~\cite{Lov} and to the wrong paper of Hoffman~\cite{Hof}.
Thus, I thought that it may be a good idea to write my memories of Hoffman's bound and devote them
to his memory.

\section{The bound}

Throughout $G$ is a simple graph of order $n$, having adjacency matrix $A$ with eigenvalues
$\lambda_1\geq\cdots\geq\lambda_n$.
A subset of the vertex set of $G$ with no edges is a {\em coclique} or {\em independent set}.
The maximum size of an independent set in $G$ is the {\em independence number}, denoted by $\alpha$.

\begin{thm}\label{H}
{\rm (Hoffman)}
If $G$ is regular of degree $k$, then
\begin{eqnarray}\label{Hoffman}
\alpha \leq n\frac{-\lambda_n}{k-\lambda_n}.
\end{eqnarray}
\end{thm}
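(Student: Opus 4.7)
The plan is to test the quadratic form $x^T A x$ on the characteristic vector $x\in\{0,1\}^n$ of a maximum independent set $S$, and combine two facts: first, $x^T A x = 0$ because $S$ carries no edges, so the $0/1$ pattern of $x$ picks out only diagonal zeros of $A$; second, on the subspace orthogonal to $\mathbf{1}$ the Rayleigh quotient of $A$ is bounded below by $\lambda_n$. Regularity is what makes this strategy work: since $A\mathbf{1}=k\mathbf{1}$, the orthogonal decomposition of $x$ along $\mathbf{1}$ interacts cleanly with $A$ and eliminates the cross terms.

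Concretely, I would write $x=(\alpha/n)\mathbf{1}+y$ with $y\perp\mathbf{1}$, noting that $x^T x=\alpha$ forces $\|y\|^2=\alpha-\alpha^2/n$. Expanding $x^T A x$ and using $A\mathbf{1}=k\mathbf{1}$ together with $\mathbf{1}^T y=0$ collapses to
\[
0 \;=\; x^T A x \;=\; \frac{k\alpha^2}{n} + y^T A y.
\]
Applying the Rayleigh bound $y^T A y \geq \lambda_n\|y\|^2 = \lambda_n(\alpha-\alpha^2/n)$ then gives
\[
0 \;\geq\; \frac{k\alpha^2}{n} + \lambda_n\!\left(\alpha-\frac{\alpha^2}{n}\right) \;=\; \alpha\lambda_n + \frac{\alpha^2}{n}(k-\lambda_n),
\]
and dividing by $\alpha>0$ and rearranging yields exactly $\alpha/n \leq -\lambda_n/(k-\lambda_n)$.

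I do not anticipate a serious obstacle; the whole argument is a one-shot Rayleigh computation. The only point requiring a remark is the final division: if $G$ has any edge then $\mathrm{tr}(A)=0$ forces $\lambda_n<0$, so $k-\lambda_n>0$ and the rearrangement is legitimate, while if $G$ is edgeless the inequality is trivial with $\alpha=n$. The essential ingredient is regularity, used precisely so that $\mathbf{1}$ is an eigenvector of $A$ and the cross terms in the expansion of $x^T A x$ drop out.
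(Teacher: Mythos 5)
Your proof is correct and is essentially the paper's argument in a different wrapper: the paper shows that $E=A-\frac{k-\lambda_n}{n}J_n-\lambda_nI_n$ is positive semidefinite and restricts to the principal submatrix indexed by the coclique, while you evaluate the same quadratic form at the coclique's characteristic vector via the decomposition $x=(\alpha/n)\mathbf{1}+y$ and the Rayleigh bound $y^{\top}Ay\geq\lambda_n\|y\|^2$ on $\mathbf{1}^{\perp}$, arriving at the identical inequality $0\geq\frac{k-\lambda_n}{n}\alpha^2+\lambda_n\alpha$. Your closing remarks on the sign of $k-\lambda_n$ and the edgeless case are correct details that the paper leaves implicit.
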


\begin{proof}
Since $G$ is regular, $A\1=k\1$ ($\1$ is the all-ones vector).
In fact, $k=\lambda_1$ by the Perron-Frobenius theorem.
So $A$ and the $n\times n$ all-ones matrix $J_n$ have a common basis of eigenvectors,
consisting of $\1$ and $n-1$ vectors orthogonal to $\1$.
This implies that the smallest eigenvalue of $A - \frac{k-\lambda_n}{n}J_n$ equals $\lambda_n$,
and hence $E=A-\frac{k-\lambda_n}{n}J_n - \lambda_nI_n$ is positive semi-definite.
Since $G$ has a coclique of order $\alpha$,
the $\alpha\times\alpha$ matrix $E_\alpha = -\frac{k-\lambda_n}{n}J _\alpha - \lambda_n I_\alpha$ is a principal submatrix of $E$.
Therefore, $E_\alpha$ is positive semi-definite, which implies $-\lambda_n-\alpha(k-\lambda_n)/n \geq 0$,
hence $\alpha\leq -\lambda_n n/(k-\lambda_n)$.
\end{proof}

\section{History}
The story starts with Philippe Delsarte.
In 1973 he published his influential PhD thesis~\cite{Del}
entitled {\lq}An algebraic approach to the association schemes of coding theory{\rq}.
An important result in \cite{Del} is the so-called {\em linear programming bound} for subsets in an association scheme.
As an example of this linear programming bound Delsarte derives the following upper bound for $\omega$,
the maximum order of a clique in a strongly regular graph $G$ (formula~(3.22) in \cite{Del}):
\begin{eqnarray}\label{Delsarte}
\omega \leq 1-k/\lambda_n.
\end{eqnarray}
A clique in $G$ is a coclique in the complement of $G$, and for a strongly regular graph the complement
is also strongly regular and has degree $n-k-1$ and smallest eigenvalue $-\lambda_2-1$.
Therefore $\alpha \leq 1+(n-k-1)/(\lambda_2+1)$.
The eigenvalues of a strongly regular graph satisfy $(k-\lambda_2)(k-\lambda_n)=(k+\lambda_2\lambda_n)n$.
By use of this identity 
it follows that the latter inequality is Hoffman's bound (\ref{Hoffman}).
Thus, if applied to a strongly regular graph, Delsarte~\cite{Del} is the right reference for~(\ref{Hoffman}).

The above derivation can be a bit cumbersome,
and it may come as a surprise that the final formula is as simple as Hoffman's bound.
But in fact, there is a reason.
A strongly regular graph is a special case of a graph in an association scheme and Delsarte has proved that
for such  graph the product of the linear programming bounds for a clique and a coclique is at most $n$.
Therefore $\alpha\leq n/(1-k/\lambda_n)$.
\\

Jaap Seidel was a good friend and close collaborator of Jean-Marie Goethals, who was the supervisor of Delsarte.
So Jaap knew Delsarte and his work very well and he liked the bounds for a clique and coclique in a strongly regular graph.
At some conference in which Jaap Seidel and Alan Hoffman both participated
(I don't remember if Jaap mentioned to me which conference it was;
perhaps it was the NATO meeting on Combinatorics which took place at Nijenrode castle in Breukelen, The Netherlands, in 1974),
Jaap and Alan were sitting together talking mathematics.
Jaap told Alan about Delsarte's result.
Then, a little later Alan replied that he could prove that the coclique bound is valid for arbitrary regular graphs,
and moreover, that it would give a short proof for his lower bound on the chromatic number $\chi$ ($\chi\geq 1-\lambda_1/\lambda_n$;
see \cite{Hof}) in case the graph is regular.
Indeed, in a vertex coloring each color class has maximum size at most $\alpha$, so the number of color classes
is at least $n/\alpha \geq 1-k/\lambda_n$.
In 1975, Jaap told me the story and he wondered if the Hoffman bound could be generalized to arbitrary graphs.
It was the starting point of my PhD research.
\\

In 1979 L\'aszl\'o Lav\'asz published his famous paper {\lq}On the Shannon capacity of a graph{\rq}~\cite{Lov}.
In that paper Lov\'asz introduced the graph parameter $\vartheta$ (now known as {\lq}Lov\'asz's theta{\rq})
and proved that $\vartheta$ is an upper bound for the {\em Shannon capacity} $\Theta$, a measure from information theory.
In~\cite[Theorem~9]{Lov} Lov\'asz states that $\vartheta\leq -n\lambda_n/(k-\lambda_n)$.
By definition $\Theta$ is bounded from below by the independence number $\alpha$.
Therefore Lov\'asz' Theorem~9 implies Hoffman's bound.
Lov\'asz was aware of this and mentioned it in the acknowledgements.
However, he gives Hoffman too much credit and attributes Theorem~9 to him.

It is interesting that Hoffman was very close to proving that his bound is also an upper bound
for the Shannon capacity.
It follows easily from the proof of Theorem~\ref{Hoffman} that the Hoffman bound remains valid for a
weighted adjacency matrix $A$, as long as all row (and column) sums are equal to $k$.
Using this observation it takes just a few steps to prove that $\Theta\leq -\lambda_n n/(k-\lambda_n)$, as is illustrated in Theorem~3.4 of \cite{Hae3}.

Because $\vartheta$ is defined for an arbitrary graph, $\vartheta$ can be seen as a generalization of Hoffman's bound
to arbitrary graphs.
However, the definition of $\vartheta$ is involved
and what Jaap wanted was an easy formula in terms of the eigenvalues of the adjacency matrix.

\section{Interlacing eigenvalues}
The generalization that Jaap had in mind appeared in my PhD thesis~\cite{Hae2},
where I used eigenvalue interlacing to obtain a number of Hoffman-type inequalities.
An announcement of the results appeared in \cite{Hae1} in 1978, and this paper is perhaps the
earliest paper that mentions Hoffman's bound.
Below is a simplified illustration of this approach.
\begin{thm}\label{interlacing} {\rm\cite{Hae2,Hae3}}
Consider the matrices
\[
A=\left[
\begin{array}{cc}
A_{1,1} & A_{1,2} \\ A_{2,1} & A_{2,2}
\end{array}
\right]
\mbox{ and }
B=
\left[
\begin{array}{cc}
b_{1,1} & b_{1,2} \\ b_{2,1} & b_{2,2}
\end{array}
\right],
\]
where $A$ is a real symmetric matrix of order $n$ partitioned such that $A_{1,2}=A_{2,1}^\top$,
and $b_{i,j}$ is the average row sum of $A_{i,j}$. 
Let $\lambda_1\geq\cdots\geq\lambda_n$ be the eigenvalues of $A$, and let $\mu_1\geq \mu_2$ be the eigenvalues of $B$.
Then
\\[3pt]
(i)\ \ (interlacing) $\lambda_1\geq\mu_1\geq\lambda_{n-1}$, \ and \ $\lambda_2\geq\mu_2\geq\lambda_n$,
\\[3pt]
(ii) if $(\mu_1,\mu_2) = (\lambda_1,\lambda_2)$, or $(\lambda_1,\lambda_n)$, or $(\lambda_{n-1},\lambda_n)$ then
each block $A_{i,j}$ has constant row and column sum.
\end{thm}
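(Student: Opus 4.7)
The plan is to reduce everything to Cauchy's interlacing theorem for principal submatrices, which requires first passing from the (non-symmetric) quotient matrix $B$ to a symmetric matrix with the same spectrum. Let $\tilde S$ be the $n\times 2$ characteristic matrix of the partition whose two columns are the $0/1$ indicators of the parts, so that $\tilde S^\top\tilde S=\mathrm{diag}(n_1,n_2)$ and $B=(\tilde S^\top\tilde S)^{-1}\tilde S^\top A\tilde S$. Setting $S=\tilde S(\tilde S^\top\tilde S)^{-1/2}$ produces an $n\times 2$ matrix with orthonormal columns spanning the subspace $V:=\mathrm{col}(\tilde S)$ of vectors that are constant on each part, and a direct calculation gives $S^\top A S=(\tilde S^\top\tilde S)^{1/2}B(\tilde S^\top\tilde S)^{-1/2}$, so $S^\top A S$ is similar to $B$ and has the same eigenvalues $\mu_1,\mu_2$.

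For part (i) I would extend $S$ to an orthogonal $n\times n$ matrix $[\,S\mid S'\,]$. Then $M:=[\,S\mid S'\,]^\top A[\,S\mid S'\,]$ is orthogonally similar to $A$ and has $S^\top A S$ as its leading $2\times 2$ principal submatrix. Cauchy's interlacing theorem applied to $M$ and this submatrix yields $\lambda_i\ge\mu_i\ge\lambda_{n-2+i}$ for $i=1,2$, which is exactly the pair of interlacing chains in~(i).

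For part (ii), let $u_1,u_2$ be orthonormal eigenvectors of $S^\top A S$ for $\mu_1,\mu_2$ and put $y_i=Su_i\in V$. The heart of the argument is to show that, in each of the three equality cases, $y_1$ and $y_2$ are themselves eigenvectors of $A$. Once this is done, $V=\mathrm{span}(y_1,y_2)$ is $A$-invariant; since $V$ is also the span of the characteristic vectors $e_1,e_2$ of the two parts, the relation $Ae_j\in V$ asserts that $Ae_j$ is constant on each part. Reading off the entries of $Ae_j$ in the rows indexed by part $i$ gives precisely the row sums of the block $A_{i,j}$, so each block has constant row sum; symmetry of $A$ then yields constant column sums as well.

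The main obstacle is justifying the eigenvector claim itself. The case $(\mu_1,\mu_2)=(\lambda_1,\lambda_n)$ is easy: it follows immediately from the extremal Rayleigh-quotient characterizations of $\lambda_1$ and $\lambda_n$, since $y_1^\top A y_1=\lambda_1\|y_1\|^2$ forces $Ay_1=\lambda_1 y_1$ and likewise for $y_2$. The two ``one-sided'' cases $(\lambda_1,\lambda_2)$ and $(\lambda_{n-1},\lambda_n)$ are genuinely more delicate, and I would dispatch them with the tightness version of Cauchy's interlacing theorem applied to $M$: if the top (respectively bottom) two eigenvalues of the leading $2\times 2$ block coincide with the top (respectively bottom) two eigenvalues of $M$, then the off-diagonal block of $M$ joining the first two coordinates to the remaining $n-2$ must vanish --- equivalently, $V$ is $A$-invariant and $y_1,y_2$ are eigenvectors of $A$, as required.
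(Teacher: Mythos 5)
Your proof is correct, but there is nothing in the paper to compare it against: Theorem~\ref{interlacing} is stated with citations to \cite{Hae2,Hae3} and no proof is given in the text. Your route --- writing $B=(\tilde S^\top\tilde S)^{-1}\tilde S^\top A\tilde S$, symmetrizing to $S^\top AS$ with $S=\tilde S(\tilde S^\top\tilde S)^{-1/2}$, and realizing $S^\top AS$ as the leading principal $2\times2$ block of $[\,S\mid S'\,]^\top A[\,S\mid S'\,]$ so that Cauchy interlacing gives (i) --- is essentially the proof in the cited sources, except that there the interlacing $\lambda_i\geq\mu_i\geq\lambda_{n-2+i}$ is derived directly from Courant--Fischer with test vectors chosen in intersections of eigenspaces rather than by completing $S$ to an orthogonal matrix; the two reductions are interchangeable. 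Your preparatory computations are all right, the Rayleigh-quotient treatment of the case $(\mu_1,\mu_2)=(\lambda_1,\lambda_n)$ is complete, and the passage from $A$-invariance of $V=\mathrm{col}(\tilde S)$ to constant row (hence, by symmetry, column) sums of the blocks is correct. The one step you assert rather than prove is the ``tightness version'' of Cauchy interlacing used for the cases $(\lambda_1,\lambda_2)$ and $(\lambda_{n-1},\lambda_n)$. The assertion is true, but it is precisely the nontrivial content of part (ii), so it should be justified: after an orthogonal change of basis in the first two coordinates making the leading block diagonal, the first basis vector attains the unconstrained maximum $\lambda_1$ of the Rayleigh quotient of $M$ and is therefore an eigenvector of $M$; the second basis vector is orthogonal to it and attains the constrained maximum $\lambda_2$, hence is also an eigenvector of $M$; reading off $Me_1$ and $Me_2$ shows both rows of the off-diagonal block vanish, and the bottom case follows by applying the same argument to $-M$. (This is exactly Theorem~1.1(iii)--(iv) of \cite{Hae3}, where all three of your cases are the instances $l=2,1,0$ of tight interlacing and the conclusion $SB'=AS$ for $B'=S^\top AS$ is what makes the partition equitable.) With that two-line argument supplied, the proof is complete.
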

We apply Theorem~\ref{interlacing} to the adjacency matrix $A$ of $G$.
Suppose $A_{1,1}$ corresponds to a coclique $C$ in $G$ of order $\alpha$.
Then $A_{1,1}=O$, $b_{1,1}=0$, $\alpha\, b_{1,2} = b_{2,1}(n-\alpha)$ and therefore
$\det B  = -\alpha\, b_{1,2}^2/(n-\alpha)$.
On the other hand, Theorem~\ref{interlacing}(i)  gives
\[
 -\det B = -\mu_1\mu_2 \leq -\lambda_1\lambda_n.
\]
Hence $\alpha \leq -n\lambda_1\lambda_n/(b_{1,2}^2 - \lambda_1\lambda_n)$.
If $\delta$ is the minimum degree in $G$ then $b_{1,2} \geq \delta$, and therefore:
\begin{eqnarray}\label{H1}
\alpha\leq n\frac{-\lambda_1\lambda_n}{\delta^2-\lambda_1\lambda_n}.
\end{eqnarray}
If $G$ is regular of degree $k$, then $k=\lambda_1=\delta$, so in this case
(\ref{H1}) reduces to Hoffman's ratio bound.

If equality holds in (\ref{H1}), then $\mu_1=\lambda_1$ and $\mu_2=\lambda_n$, so Theorem~\ref{interlacing}(ii)
gives that each vertex of $G$ has a constant number of neighbors inside $C$, and a constant number of neighbors outside $C$.
Clearly these constants are the entries of $B$.
In particular, in case $G$ is regular we find that equality in the Hoffman bound implies
that each vertex outside $C$ has exactly $\alpha k/(n-\alpha) = -\lambda_n$ neighbors in $C$.

If $A_{1,1}$ corresponds to an arbitrary induced subgraph of $G$ something similar can be done,
and if $G$ is regular of degree $k$, the formula becomes rather neat.
Then $k=\lambda_1=\mu_1$, and Theorem~\ref{interlacing} gives $\lambda_2\geq\mu_2\geq\lambda_n$
which leads to
\[
\lambda_2\geq \frac{n\d - mk}{n-m} \geq \lambda_n,
\]
where $m$ is the order, and $\d$ is the average degree of the induced subgraph corresponding to $A_{1,1}$.
If $\d=0$, the right hand side gives Hoffman's bound, and if $\d=m-1$ the left hand side gives
$m \leq n(\lambda_2+1)/(\lambda_2+n-k)$, which is Hoffman's bound applied to the complement of $G$.

Much later Godsil and Newman~\cite[Corollary 3.6]{GN}
generalized Hoffman's bound to arbitrary graphs in terms of the eigenvalues of the {\em Laplacian matrix} $L$ defined by $L=D-A$,
where $D$ is the diagonal matrix with the vertex degrees.
Also for this case interlacing works.
We apply Theorem~\ref{interlacing} with $L$ instead of $A$, such that $L_{1,1}$ corresponds to a coclique $C$ of order $\alpha$.
Then $L_{1,1}$ is a diagonal matrix with the degrees of the vertices in $C$ on the diagonal.
This implies that $b_{1,1}$, the average row sum of $L_{1,1}$, is at least the minimum degree $\delta$.
From $L\1=\0$, it follows that $B\1=\0$, which implies that $\mu_2=0$, $\mu_1={\rm trace}~B = b_{1,1}+b_{2,2}$
and $b_{2,2}=\alpha\, b_{1,1}/(n-\alpha)$.
Let $\varphi$ be the largest eigenvalue of $L$, then Theorem~\ref{interlacing}(i) gives $\mu_1\leq\varphi$,
and therefore:
\[
\alpha\leq n(1-\delta/\varphi).
\]
If equality holds, we get the same conclusion as for equality in (\ref{H1}).
If $G$ is regular of degree $k$, then $\delta=k$ and $\varphi=k-\lambda_n$, and again we obtain Hoffman's ratio bound.

\end{document}